\documentclass{amsart}

\usepackage{amsmath, amssymb, amsthm}
\usepackage{xcolor}

\newtheorem{theorem}{Theorem}
\newtheorem{lemma}{Lemma}
\newtheorem{corollary}{Corollary}

\newcommand{\Psu}{\mathrm{P}_{\mathcal{U}}}
\newcommand{\Fit}{\mathrm{Fit}}
\newcommand{\cp}{\mathrm{cp}}

\title[Supersoluble groups]{Supersoluble groups and the probability of generating 
	a supersoluble subgroup}
\author{Andrea Lucchini}
\address{Andrea Lucchini\\ Universit\`a di Padova\\  Dipartimento di Matematica \lq\lq Tullio Levi-Civita\rq\rq\\ Via Trieste 63, 35121 Padova, Italy\\email: lucchini@math.unipd.it}
\date{}

\begin{document}
	\maketitle
	
	\begin{abstract}
		Let $G$ be a finite group and let $\Psu(G)$ denote the 
		probability that two randomly chosen elements of $G$ generate 
		a supersoluble subgroup. We prove that if $\Psu(G) \geq 16/25$ 
		then $G$ is supersoluble, and that the bound $16/25$ is sharp, 
		being attained by the group $G = (C_5 \times C_5) \rtimes Q_8$, 
		where $Q_8$ acts faithfully and irreducibly on $C_5 \times C_5$.
	\end{abstract}
	
	\section{Introduction}
	
	Let $\mathcal{C}$ be a class of finite groups. We say that 
	$\mathcal{C}$ is \textit{recognizable} if, whenever all 
	$2$-generated subgroups of a finite group $G$ belong to 
	$\mathcal{C}$, then $G$ itself belongs to $\mathcal{C}$. 
	It is well known for example that the class of finite soluble groups is recognizable (see Thompson \cite{th} and Flavell \cite{pf}). On the other hand the class of finite metabelian groups is not recognizable (see \cite[Example 7.1]{bln}).
	More quantitatively, for $0 < \eta < 1$, we say that 
	$\mathcal{C}$ is \textit{$\eta$-recognizable} if, whenever 
	the probability that a $2$-generated subgroup of $G$ belongs 
	to $\mathcal{C}$ is greater than $\eta$, then $G \in \mathcal{C}$. 
	Formally, letting
	$$\mathrm{P}_{\mathcal{C}}(G) = \frac{|\{(x,y) \in G \times G 
		\mid \langle x,y\rangle \in \mathcal{C}\}|}{|G|^2},$$
	we say that $\mathcal{C}$ is $\eta$-recognizable if 
	$\mathrm{P}_{\mathcal{C}}(G) > \eta$ implies $G \in \mathcal{C}$.
	
	Clearly, every $\eta$-recognizable class is recognizable. 
	It is natural to ask whether every recognizable class 
	is $\eta$-recognizable for some $\eta$, and what the optimal 
	value of $\eta$ is.
	
	Guralnick and Wilson \cite{GW00} proved that for any class 
	$\mathcal{C}$ closed under taking subgroups, quotients and 
	extensions, $\mathcal{C}$ is $\eta$-recognizable for some 
	$\eta > 0$ (combining \cite[Proposition 5]{mqrd} with \cite[Theorem
	1.1]{mqrd}, one may deduce that $\eta$ can be taken to be $\frac{37}{90})$.
	In particular they proved that the class 
	of finite soluble groups is $\frac{11}{30}$-recognizable
	and that this bound is sharp. 
	
	Earlier, 
	Gustafson \cite{Gus73} had proved that the class of finite  abelian 
	groups is $\frac{5}{8}$-recognizable, while Guralnick and Wilson in \cite{GW00} also prove that the class of finite nilpotent groups is $\frac{1}{2}$-recognizable. The next natural candidate to investigate is the class 
	$\mathcal{U}$ of supersoluble groups, which is not closed under taking extension and lies strictly 
	between the classes of nilpotent and soluble groups. 
	It is known that $\mathcal{U}$ is recognizable 
	(see for instance \cite[Example 1]{BH09}).
	 Our main result determines the 
	optimal threshold for $\eta$-recognizability.

	\begin{theorem}\label{thm:main}
		The class of supersoluble groups is $\frac{16}{25}$-recognizable. 
		More precisely, if $G$ is a finite group with 
		$\Psu(G) \geq \frac{16}{25}$, then $G$ is supersoluble. 
		Moreover, the bound $\frac{16}{25}$ is sharp, being attained 
		by the group $G = (C_5 \times C_5) \rtimes Q_8$, where $Q_8$ 
		acts faithfully and irreducibly on $C_5 \times C_5$.
	\end{theorem}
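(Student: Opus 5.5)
The proof runs through a minimal counterexample. Let $G$ be non-supersoluble with $\Psu(G)\ge 16/25$ and $|G|$ minimal. Since $\mathcal{U}$ is quotient-closed, $\langle x,y\rangle\in\mathcal{U}$ implies $\langle xN,yN\rangle\in\mathcal{U}$, so $\Psu(G)\le\Psu(G/N)$ for every normal subgroup $N$. Hence every proper quotient of $G$ is supersoluble. Also $16/25>11/30$, so the Guralnick--Wilson theorem shows that $G$ is soluble. Since $\mathcal{U}$ is a saturated formation, $G$ is then a primitive soluble group: $G=V\rtimes H$, where $V$ is the unique minimal normal subgroup, $V$ is elementary abelian of order $p^n$ with $n\ge 2$, $H$ is supersoluble and acts faithfully and irreducibly on $V$, and $V$ is not cyclic.

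Next I estimate $\Psu(G)$ by conditioning on the images in $H$. Fix $(h_1,h_2)\in H^2$ and set $K=\langle h_1,h_2\rangle$. The pairs $(v_1h_1,v_2h_2)$ generating a supersoluble subgroup $X$ satisfy $XV=VK$, and $X\cap V$ is a $K$-submodule $W$ of $V$. Supersolubility of $X$ forces every $K$-chief factor of $W$ to have order $p$. For $K$ acting irreducibly on $V$ with $\dim V\ge 2$, this forces $W\ne V$, so $X$ is a proper supplement of $V$ in $VK$. I would count such pairs by cases.
\begin{itemize}
\item If $W=0$, then $X$ is a complement, and the number of complements is controlled by $|H^1(K,V)|$ and $|V|/|C_V(K)|$.
\item Otherwise $W$ is a proper nonzero $K$-submodule, which can happen only when $V_K$ is reducible.
\end{itemize}
This yields a bound of the form $\Psu(G)\le \frac{1}{|H|^2}\sum_{(h_1,h_2)}\rho(K)$, where $\rho(K)$ is close to $1$ when $K$ fixes many vectors or acts reducibly, and is at most about $1/|V|$ or $|H^1|/|V|$ when $K$ is irreducible on $V$ (the case $W=0$ with $K$ irreducible). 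In particular, pairs with $K$ irreducible on $V$ and non-abelian-like contribute little.

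Write $\alpha$ for the probability that two random elements of $H$ generate a subgroup acting irreducibly (and non-supersolubly-compatibly) on $V$. Then $\Psu(G)\le 1-\alpha(1-c/|V|)$, and I must show $\alpha$ is large enough to push this below $16/25$, with equality only in the stated example. Here I use the structure of supersoluble irreducible linear groups. $H$ has a normal abelian subgroup $A$ with $H/A$ small, and $V$ restricted to $\Fit(H)$ is homogeneous or induced. If every $2$-generated subgroup of $H$ acted reducibly with $p$-dimensional factors, recognizability would make $G$ supersoluble. The probability estimates would go through the cases $H$ abelian (then $V$ is irreducible under a cyclic subgroup and $\alpha$ is large), $H$ nonabelian of small order such as $Q_8$ on $C_5^2$, and $H$ imprimitive.

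For sharpness, in $(C_5\times C_5)\rtimes Q_8$ a subgroup $K\le Q_8$ acts irreducibly iff $K$ is noncyclic, i.e.\ $K=Q_8$. Two random elements generate $Q_8$ with probability $24/64=3/8$. In that case no supersoluble supplement exists, since the complements are conjugate to $Q_8$, which is irreducible. For cyclic $K$ of order $4$, the space $V$ splits over $\mathbb{F}_5$ into eigenlines because $i^2=-1$ has roots mod $5$, so all pairs work. This gives $\Psu=5/8$, not $16/25$, so I would recount: $1-\frac38=\frac58$ plus possible contributions from pairs generating proper subgroups $X$ with $XV=G$ that are supersoluble; the actual value $16/25$ must come from such additional supersoluble subgroups. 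Verifying that the count gives exactly $16/25$ is a finite computation.

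The main obstacle is the general inequality $\alpha(1-c/|V|)>9/25$ for all primitive supersoluble-by-abelian configurations. I would first try to reduce to $H$ minimal irreducible, which is cyclic or of small extraspecial type, using that $\alpha$ only decreases in the bound through subgroup conditioning, and then check small $p$ and $n$ by hand.
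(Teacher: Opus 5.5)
Your reduction to a primitive soluble group $G=V\rtimes H$ with $H$ supersoluble is correct and matches the paper. The decisive estimate for such $G$ is missing, however. You leave the inequality $\alpha(1-c/|V|)>9/25$ open. With your $\alpha$, which counts only pairs whose $K=\langle h_1,h_2\rangle$ is irreducible on all of $V$ (reducible $K$ having $\rho(K)$ ``close to $1$''), this inequality is false in general.

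Take $H=2^{1+4}_+$ acting faithfully and irreducibly on $V=\mathbb{F}_p^4$, $p$ odd. Every $2$-generated $K\le H$ has order at most $8$ and exponent dividing $4$, so its irreducible $\mathbb{F}_pK$-modules have dimension at most $2$. Hence no $K$ is irreducible on $V$, your $\alpha$ is $0$, and your bound gives nothing, although $G$ is not supersoluble.

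The missing idea is the one you already used for $W=X\cap V$, applied to a \emph{section}. Suppose some $K$-composition factor $W_2/W_1$ of $V$ has order at least $p^2$. Then a supersoluble $\langle v_1h_1,v_2h_2\rangle$ must complement $W_2/W_1$ modulo $W_1$, and counting complements gives $\pi(h_1,h_2)\le 1/p^2$ (the paper's Lemma~1).

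The second ingredient is that pairs with no such factor generate \emph{abelian} subgroups (Lemma~2). Since $O_p(H)=1$ and $H$ is supersoluble, $H=K_0\rtimes P$ with $K_0$ a normal $p'$-subgroup and $P$ abelian. A triangularizable $X$ has a normal Sylow $p$-subgroup $Q$ with an abelian complement $Y\le K_0$, and $[Q,Y]\le Q\cap K_0=1$.

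Gustafson's bound $\cp(H)\le 5/8$ for nonabelian $H$ then gives $\Psu(G)\le 1-\frac38\bigl(1-\frac1{p^2}\bigr)\le\frac{16}{25}$ for $p\ge 5$. Separate counts are needed for $H$ cyclic (via a prime power $q$ dividing $|H|$ with $q\nmid p-1$, Lemma~3) and for $p\in\{2,3\}$. Your suggested reduction to a minimal irreducible $H$ has no justification, since $\Psu(V\rtimes H)$ is not monotone in $H$ in any evident way.

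Your sharpness computation also fails at an identifiable point. For $K=Q_8$ you discard the complements of $V$ because $Q_8$ acts irreducibly on $V$. But a complement is isomorphic to $Q_8$, which is nilpotent and hence supersoluble; it is $VQ_8$, not the complement, that is not supersoluble. Since $H^1(Q_8,V)=0$, there are $|V|=25$ complements, each arising from exactly one of the $25^2$ pairs $(v_1,v_2)$. So $\Psu(G)=1-\frac38\cdot\frac{24}{25}=\frac{16}{25}$, which is exactly your own bound $1-\alpha(1-c/|V|)$ with $\alpha=3/8$ and $c=1$.

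Finally, this group is non-supersoluble with $\Psu(G)=16/25$, so a minimal counterexample with $\Psu(G)\ge 16/25$ certainly exists. What can be proved, and what $\frac{16}{25}$-recognizability means, is that $\Psu(G)>16/25$ forces supersolubility. Your induction must therefore assume strict inequality, and then needs only $\Psu(G)\le 16/25$ in the primitive case. The ``$\ge$'' in the statement, and in the paper's Step~5, is a slip.
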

	
We note that it remains an open problem whether every 
recognizable class is $\eta$-recognizable for some 
$\eta > 0$. Natural candidates for further investigation 
are the class of groups with nilpotent derived subgroup 
and, more generally, the class of groups of Fitting length 
at most $k$, for a fixed positive integer $k$. These classes 
are known to be recognizable \cite[Examples 2 and 3]{BH09}, 
but whether they are $\eta$-recognizable for some $\eta > 0$ 
remains open.
	
	The results of Guralnick and Wilson easily reduce the proof of Theorem \ref{thm:main} to 
	 a detailed analysis of primitive soluble
	  groups with supersoluble point-stabilizers. 
	Recall that a finite group is \textit{primitive} if it has a 
	faithful primitive permutation representation; equivalently, 
	it has a core-free maximal subgroup. A primitive soluble group 
	has the form $G = V \rtimes H$, where $V = \mathrm{Soc}(G)$ is 
	elementary abelian and $H$ acts faithfully and irreducibly on $V$.
	
	The paper is organized as follows. In Section~2 we establish 
	the key lemmas on the module-theoretic structure of primitive 
	soluble groups. In Section~3 we prove the main theorem.
	
	\section{Preliminary lemmas}
	
	Throughout this section, $G = V \rtimes H$ is a primitive 
	soluble group, where $V$ is a faithful irreducible $H$-module 
	of order $p^n$, with $p$ prime and $n \geq 2$. We assume 
	moreover that $H$ is supersoluble.
	
	Since $H$ is supersoluble, $H/\Fit(H)$ is abelian. Since $V$ 
	is a faithful irreducible $H$-module, $O_p(H)=1$. Since 
	$P \cap \Fit(H) = 1$, a Sylow $p$-subgroup $P$ of $H$ is 
	isomorphic to its image in $H/\Fit(H)$, which is abelian, 
	hence $P$ is abelian. Moreover $H/\Fit(H)$ splits as
	$$H/\Fit(H) = K/\Fit(H) \times P\,\Fit(H)/\Fit(H)$$
	for a suitable subgroup $K$ of $H$, and $H = K \rtimes P$.
	
	For every pair $(h_1,h_2) \in H \times H$, let
	$$\Sigma(h_1,h_2) = \{(v_1,v_2)\in V^2 \mid 
	\langle v_1h_1, v_2h_2\rangle \text{ is supersoluble}\}$$
	and set $\pi(h_1,h_2) = |\Sigma(h_1,h_2)|/|V|^2$. Note that
	$$\Psu(G) = \frac{1}{|H|^2} \sum_{(h_1,h_2) \in H \times H} 
	\pi(h_1,h_2).$$
	
	We say that a pair $(h_1,h_2) \in H \times H$ is 
	\textit{$V$-large} if some composition factor of the 
	$\langle h_1,h_2\rangle$-module $V$ has order $\geq p^2$.
	
	\begin{lemma}\label{lem:large}
		If $(h_1,h_2)$ is $V$-large, then $\pi(h_1,h_2) \leq 1/p^2$.
	\end{lemma}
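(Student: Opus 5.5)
The plan is to fix $(h_1,h_2)$, set $L=\langle h_1,h_2\rangle\le H$, and project everything onto a single composition factor of $V$ of order at least $p^2$. Counting lifts through that factor should give the bound $1/p^2$, provided a first-cohomology estimate holds. That estimate is not settled here, and I flag it as the main obstacle.

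\textbf{Setup and reduction.} Choose $L$-submodules $B<A\le V$ such that $M=A/B$ is an irreducible $L$-module with $|M|\ge p^2$. Put $X=\langle v_1h_1,v_2h_2\rangle$. Since $V\trianglelefteq G$, the group $X$ normalizes $V$ and acts on it through $XV/V\cong L$. Let $Y$ be the image of $X$ in $\bar G=(V/B)\rtimes H$. If $X$ is supersoluble, so is $Y$. Now $Y\cap(A/B)$ is normalized by $Y$, so it is an $L$-submodule of the irreducible module $M$, hence equal to $0$ or $M$. If it were $M$, then $M$ would be a normal subgroup of the supersoluble group $Y$. It would then contain a minimal normal subgroup of $Y$ of order $p$, and since $M$ is abelian this would be an $L$-submodule of $M$ of order $p$, contradicting irreducibility and $|M|\ge p^2$. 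Hence supersolubility of $X$ forces $Y\cap M=0$.

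\textbf{Counting through lifts.} Partition $V^2$ into the fibres over $(V/A)^2$ and then into cosets of $A^2$. Each such fibre projects onto $M^2$, uniformly with multiplicity $|B|^2$, so it suffices to work in $(V/B)\rtimes L$ with a fixed pair of cosets. Fix $(w_1,w_2)$ representing the coset and vary $(m_1,m_2)\in M^2$. All the groups $\langle m_1w_1h_1,m_2w_2h_2\rangle M$ coincide with one group $Z$, and $M\trianglelefteq Z$. When $Y\cap M=0$, the subgroup $Y$ is a complement to $M$ in $Z$. A complement contains exactly one element of each coset of $M$, so the map $(m_1,m_2)\mapsto Y$ is injective on the good pairs. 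Moreover the good pairs correspond to derivations $Z/M\to M$ determined by their values on the two generators. So the proportion of good pairs in each fibre is at most
$$\frac{|\mathrm{Der}(Z/M,M)|}{|M|^2}=\frac{|M|\,|H^1(Z/M,M)|}{|M|^2}=\frac{|H^1(Z/M,M)|}{|M|},$$
using that inner derivations form a copy of $M$, since $M$ is a nontrivial irreducible module. It then remains to show that $|H^1(Z/M,M)|\le |M|/p^2$. In favourable cases this is immediate: for instance if $H^1=0$, which holds when $p\nmid |L|$ and the action on $M$ factors appropriately, since then $\pi\le 1/|M|\le 1/p^2$.

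\textbf{Main obstacle.} The hard step is controlling $H^1(Z/M,M)$ in general. The group $Z/M$ is $2$-generated, so a priori one only gets $|\mathrm{Der}|\le |M|^2$, which is useless. To do better I would combine two facts about the complement $Y$. First, $Y\cap(V/B)$ must have all $L$-composition factors of order $p$. Second, the structure established above for $H$: $O_p(H)=1$, $H=K\rtimes P$ with $P$ abelian, and $H/\Fit(H)$ abelian. Using these, I would first try to show that the $p'$-part of $L$ acts nontrivially on $M$, so that restriction to a $p'$-subgroup kills $H^1$. If that fails, I would refine the choice of $A$ and $B$ so that the relevant cohomology vanishes, leaving only the factor $1/|M|\le 1/p^2$. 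I expect that making this cohomological vanishing argument work for every $V$-large pair is where the real effort lies.
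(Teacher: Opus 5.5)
\textbf{Verdict: there is a genuine gap.} Your setup is correct: the fibres over $(V/A)^2$, the fact that supersolubility forces $Y\cap M=0$, and the injection of good pairs into complements of $M$ in $Z$. (Your fibrewise count is also how the paper's bound $|W_2/W_1|\,|W_1|^2$ has to be read, namely per coset of $W_2\times W_2$.) But the argument stops at the decisive step. Worse, the inequality you reduce to, $|H^1(Z/M,M)|\le |M|/p^2$, is false in general, so no analysis of $H^1$ alone can finish it.

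\textbf{Counterexample to your reduction.} Take $H=C_{15}$ acting on $V=\mathbb{F}_{16}$, so $p=2$. Take the pair $(h,1)$ with $h$ of order $3$, and $L=\langle h\rangle$. As an $L$-module $V=M\oplus M'$ with $M\cong M'$ irreducible of order $4$, so the pair is $V$-large. Choose $B=0$, $A=M$ and $(w_1,w_2)=(0,w)$ with $0\ne w\in M'$. Then $Z=V\rtimes L$ and $Z/M\cong M'\rtimes L\cong A_4$. Inflation--restriction gives $H^1(Z/M,M)\cong \mathrm{Hom}_L(M',M)\cong\mathbb{F}_4$. Hence $|\mathrm{Der}(Z/M,M)|=|M|^2$: every pair in this fibre yields a complement of $M$, and your bound for the fibre is $1$. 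Here $p\nmid |L|$, so the ``favourable case'' you mention does not give $H^1=0$ either.

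The source of the trouble is the normal $p$-subgroup $(Z\cap V/B)/M$ of $Z/M$. It acts trivially on $M$ and contributes $\mathrm{Hom}_L(\,\cdot\,,M)$ to $H^1$. Restriction to a $p'$-subgroup cannot detect this: it always lands in $0$, and is injective only when the subgroup has $p'$-index. So the plan in your last paragraph cannot work.

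\textbf{The missing idea.} You used only $Y\cap M=0$ and discarded the supersolubility of $Y$ itself. In the example above, all $16$ complements are isomorphic to $A_4$, so that fibre contains \emph{no} good pairs.

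In general, suppose a fibre contains one good pair. Then $Z/M\cong Y$ is supersoluble, while $M$ is a chief factor of the soluble group $Z$ of order at least $p^2$, i.e.\ $\mathcal{U}$-eccentric. Gasch\"utz's theorem, which is what the paper invokes, says that in this situation all complements of $M$ in $Z$ are conjugate. Moreover $N_M(Y)=C_M(Y)$ is a proper $Z$-submodule of $M$, hence trivial. So there are exactly $|M|$ complements, i.e.\ $H^1(Z/M,M)=0$ in every fibre that contains a good pair. Each fibre therefore contributes a proportion that is either $0$ or at most $1/|M|\le 1/p^2$, which is the paper's argument.

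\textbf{Proving the conjugacy directly.} If you prefer not to quote the theorem, you can argue as follows.
\begin{itemize}
\item Let $D=C_Z(M)\cap Y$. Then $D\trianglelefteq Z$, and $Z$ acts on $D$ through the supersoluble group $Y$, so all $Z$-chief factors of $D$ have prime order.
\item Consequently $\mathrm{Hom}_Z(D,M)=0$. This forces $C_Z(M)\cap Y_1=D$ for every complement $Y_1$, so $D$ lies in every complement.
\item Factoring out $D$, you may assume $C_Z(M)=M$. If $K/M$ is a minimal normal subgroup of $Z/M$, it is a $q$-group with $q\ne p$.
\item The Frattini argument then shows that every complement is the normalizer of a Sylow $q$-subgroup of $K$. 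Since these Sylow subgroups are conjugate, so are the complements.
\end{itemize}
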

	\begin{proof}
		Let $X = \langle h_1,h_2\rangle$ and let $W_1 \leq W_2$ be 
		$X$-submodules of $V$ such that $W_2/W_1$ is $X$-irreducible 
		and $|W_2/W_1| \geq p^2$.
		
		Let $(v_1,v_2) \in V^2$ and set 
		$Y = \langle v_1h_1, v_2h_2\rangle$. Working modulo $W_1$, 
		if $Y$ is supersoluble then $YW_1/W_1$ is a complement of 
		$W_2/W_1$ in $\langle W_2/W_1, YW_1/W_1\rangle$. By a theorem 
		of Gasch\"{u}tz, all complements of $W_2/W_1$ are conjugate (see for example \cite[Propositon 1.6.8]{dh}), 
		so their number is $|W_2/W_1|$. Moreover, if 
		$\langle v_1h_1, v_2h_2\rangle W_1 = 
		\langle v_1'h_1, v_2'h_2\rangle W_1$ then 
		$(v_1-v_1', v_2-v_2') \in W_1 \times W_1$, so each complement 
		is realized by exactly $|W_1|^2$ pairs $(v_1,v_2) \in V^2$. 
		Therefore the number of pairs $(v_1,v_2)$ for which $Y$ is 
		supersoluble is at most 
		$$|W_2/W_1| \cdot |W_1|^2 = |W_2| \cdot |W_1| \leq 
		\frac{|W_2|^2}{p^2} \leq \frac{|V|^2}{p^2},$$
		where we used $|W_1| \leq |W_2|/p^2$ and $|W_2| \leq |V|$. 
		It follows that $\pi(h_1,h_2) \leq p^{-2}$.
	\end{proof}
	
	\begin{lemma}\label{lem:abelian}
		Let $X=\langle h_1,h_2\rangle \leq H$. If $X$ is not $V$-large, then 
		$X$ is abelian.
	\end{lemma}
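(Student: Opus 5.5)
The plan is to use that $X$ acts faithfully on $V$ and that every composition factor of the $X$-module $V$ has order $p$. Choose a composition series $0=V_0<V_1<\dots<V_n=V$ of $V$ as an $X$-module. Each factor $V_i/V_{i-1}$ has order $p$, so $X$ acts on it through a homomorphism $X\to \mathrm{Aut}(C_p)\cong C_{p-1}$, which is abelian. Hence the derived subgroup $X'$ acts trivially on every factor $V_i/V_{i-1}$. Let $C$ be the stabilizer of the series, i.e. the set of elements of $\mathrm{GL}(V)$ acting trivially on all factors. Then $C$ is a $p$-group (a unitriangular group with respect to a suitable basis). So $X'\le C$ is a $p$-group, because $X$ embeds in $\mathrm{GL}(V)$ by faithfulness of $V$ as an $H$-module.

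Next I would use the structural information from the start of the section. Since $X'\le H$ is a $p$-subgroup, it lies in some Sylow $p$-subgroup of $H$. Such subgroups are abelian, but that alone is not enough; we need $X'=1$. The cleaner route is that $H'\le \Fit(H)$, since $H/\Fit(H)$ is abelian. Hence $X'\le \Fit(H)$, so $X'$ is a $p$-subgroup of the nilpotent group $\Fit(H)$, and therefore $X'\le O_p(\Fit(H))\le O_p(H)$. Since $V$ is a faithful irreducible $H$-module, $O_p(H)=1$, as recorded above. Therefore $X'=1$ and $X$ is abelian.

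The only step needing care is the claim that the kernel of the action on all factors of a series is a $p$-group. This is standard: an element acting trivially on each factor is unipotent, so its order is a power of $p$. Alternatively, the stabilizer of a series of length $n$ in $\mathrm{GL}_n(p)$ is conjugate to the unitriangular group, of order $p^{n(n-1)/2}$. A further point is to check that $X'$ lies in $\Fit(H)$, which holds because $X'\le H'$ and $H'\le\Fit(H)$ for supersoluble $H$. I expect no real obstacle; the key observation is simply to combine "$X'$ is a $p$-group" with $O_p(H)=1$ via $H'\le\Fit(H)$.
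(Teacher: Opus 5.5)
Your proof is correct, but it takes a different route from the paper's.

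\textbf{The paper's route.} The paper embeds $X$ in the upper triangular group $T(n,p)$. From this it writes $X = Q \rtimes Y$, where $Q$ is the normal Sylow $p$-subgroup of $X$ and $Y$ is an abelian $p'$-complement. It then uses the splitting $H = K \rtimes P$, with $K$ a normal $p'$-subgroup and $P$ an abelian Sylow $p$-subgroup, to conclude in three steps: $Q$ is abelian, $Y \le K$, and $[Q,Y] \le K \cap Q = 1$.

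\textbf{Your route.} You argue at the level of the derived subgroup. $X'$ acts trivially on every factor of order $p$, so it is unipotent and hence a $p$-group. Then $X' \le H' \le \Fit(H)$, and the $p$-part of $\Fit(H)$ lies in $O_p(H) = 1$.

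\textbf{Comparison.} Both arguments ultimately rest on the same two facts: $H/\Fit(H)$ is abelian and $O_p(H)=1$. Yours is shorter and more economical.
\begin{itemize}
\item It needs no Schur--Zassenhaus complement.
\item It does not use the decomposition $H = K \rtimes P$ or the abelianness of Sylow $p$-subgroups. In the paper these are introduced only for this lemma, so they could be dropped entirely.
\item It sidesteps two facts the paper uses without comment: that $K$ contains every $p'$-subgroup of $H$, and that $K$ is itself a $p'$-group (needed for $K \cap Q = 1$).
\end{itemize}
The paper's version does give a more explicit description of $X$ (a normal Sylow $p$-subgroup with an abelian complement), but that extra information is not used anywhere later.
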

	\begin{proof}
		If $(h_1,h_2)$ is not $V$-large, all the $X$-composition 
		factors of $V$ have order $p$. Let $\mathbb{F}$ be the field 
		of order $p$. Then there exists an $\mathbb{F}$-basis of $V$ 
		such that $\langle e_n,\dots,e_i\rangle$ is $X$-invariant for 
		every $1 \leq i \leq n$. With respect to this basis, $X$ 
		embeds into the group $T(n,p)$ of upper triangular $n\times n$ 
		matrices over $\mathbb{F}$. This implies that the Sylow 
		$p$-subgroup $Q$ of $X$ is normal in $X$ and has an abelian 
		complement $Y$. Since $X \leq K \rtimes P$, we have $Y \leq K$ 
		and $Q$ is abelian. Moreover 
		$[Q,Y] \leq [Q,K] \cap Q \leq K \cap Q = 1$, hence $X$ 
		is abelian.
	\end{proof}
	
	\begin{corollary}\label{cor:nonabelian}
		If $\langle h_1,h_2\rangle$ is nonabelian, then 
		$(h_1,h_2)$ is $V$-large, and in particular 
		$\pi(h_1,h_2) \leq 1/p^2$.
	\end{corollary}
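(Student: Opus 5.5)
This is the contrapositive of Lemma~\ref{lem:abelian}, combined with Lemma~\ref{lem:large}. Fix $(h_1,h_2)\in H\times H$ and put $X=\langle h_1,h_2\rangle$, and suppose $X$ is nonabelian.

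First we show that $(h_1,h_2)$ is $V$-large. Suppose it were not. Then every composition factor of the $X$-module $V$ has order $p$. This is exactly the hypothesis under which the argument of Lemma~\ref{lem:abelian} runs, so that lemma gives that $X$ is abelian, a contradiction. Hence some $X$-composition factor of $V$ has order at least $p^2$, which is the definition of $(h_1,h_2)$ being $V$-large.

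Next, Lemma~\ref{lem:large} applies directly to the $V$-large pair $(h_1,h_2)$ and gives $\pi(h_1,h_2)\le 1/p^2$, which is the second assertion.

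The only point needing care is that Lemma~\ref{lem:abelian} is phrased in terms of the subgroup $X$ rather than the pair. However, $V$-largeness depends only on the composition factors of $V$ as an $X$-module, and these are determined by $X$ alone, so the two phrasings agree. The remaining hypotheses of Lemma~\ref{lem:abelian}, namely that $H$ is supersoluble with $H=K\rtimes P$ and $P$ abelian, are the standing assumptions of this section. No genuine obstacle arises beyond this bookkeeping.
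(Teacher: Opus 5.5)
Your proof is correct and is the paper's argument: you obtain $V$-largeness as the contrapositive of Lemma~\ref{lem:abelian}, then apply Lemma~\ref{lem:large} to get $\pi(h_1,h_2)\le 1/p^2$. The paper gives the same reasoning, stated simply as ``immediate from'' those two lemmas.
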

	\begin{proof}
		Immediate from Lemmas~\ref{lem:abelian} and~\ref{lem:large}.
	\end{proof}

	\begin{lemma}\label{lem:q-large}
		Let $q$ be a prime power that does not divide $p-1$. If $q$ 
		divides the exponent $\exp(X)$ of $X$, then $(h_1, h_2)$ is $V$-large.
	\end{lemma}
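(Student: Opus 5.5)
We argue by contraposition: assuming $(h_1,h_2)$ is not $V$-large, we show that every prime power dividing $\exp(X)$ divides $p-1$. Here $X=\langle h_1,h_2\rangle$. Since $p$ does not divide $p-1$, this also covers the case where $q$ is a power of $p$.

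If $(h_1,h_2)$ is not $V$-large, then by Lemma~\ref{lem:abelian} the group $X$ is abelian. As in the proof of that lemma, every $X$-composition factor of $V$ has order $p$. So we can fix an $X$-invariant flag $V=V_n>V_{n-1}>\dots>V_0=0$ with each quotient $V_i/V_{i-1}$ of order $p$. Each $x\in X$ then acts on $V_i/V_{i-1}$ as multiplication by a scalar $\lambda_i(x)\in\mathbb{F}_p^{*}$. This gives homomorphisms $\lambda_i\colon X\to\mathbb{F}_p^{*}$.

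Let $N=\bigcap_i\ker\lambda_i$. Then $X/N$ embeds in $(\mathbb{F}_p^{*})^n$, so $\exp(X/N)$ divides $p-1$. The group $N$ acts unipotently on $V$, that is, it is trivial on every factor of the flag. Hence $N$ embeds in the unitriangular group and is a $p$-group. Since $V$ is a faithful $H$-module, $N$ is a $p$-subgroup of $H$.

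It remains to show $N=1$, and this is the crux of the argument. We know $N$ is a normal $p$-subgroup of the abelian group $X$, but $N$ need not be normal in $H$, so $O_p(H)=1$ does not apply directly. Instead we use the structure $H=K\rtimes P$ recorded before Lemma~\ref{lem:large}. The group $N$ lies in some Sylow $p$-subgroup of $H$, which is abelian and conjugate to $P$.

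Write $X=Q\times Y$ with $Q$ the Sylow $p$-subgroup and $Y\le K$ the $p'$-part, as in Lemma~\ref{lem:abelian}. Then $N\le Q$. To exploit $O_p(H)=1$, the natural tool is the following fact: a $p$-element acting unipotently on a faithful module in characteristic $p$ forces a nontrivial normal $p$-subgroup only if it lies in $O_p$. That alone is not enough, so the plan is to use the decomposition more sharply.

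Concretely, take $1\ne u\in N$ with $u^p=1$ and $u$ central in $X$. Then $u$ fixes a nonzero vector. We want to show that $C_V(u)$ is $H$-invariant, or produce a contradiction with the faithfulness and irreducibility of $V$ together with the supersolubility of $H$. The argument I would try first is this. Since $P$ is abelian and $H/\Fit(H)$ is abelian, $[u,H]\le \Fit(H)$, which is a $p'$-group. On the other hand, any $p$-element of $H$ acting unipotently on $V$ lies in the kernel of the action of $H$ on the homogeneous components of $V_{\Fit(H)}$... I expect this step to require more care, and it is the main obstacle.

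If it fails, I would fall back on a weaker but sufficient conclusion. Note that the $q$-part of the exponent can only come from elements whose eigenvalues lie in $\mathbb{F}_p$, which forces $q$ to divide $p-1$. The remaining case is $q$ a power of $p$, and there the claim would follow from showing that $p$-elements of $H$ cannot act unipotently on a subquotient flag. To show this, I would use $[u,\Fit(H)]$ together with Clifford theory: $u$ permutes the Wedderburn components of $V_{\Fit(H)}$ nontrivially, or acts on them with non-$\mathbb{F}_p$ eigenvalues, so that some $\langle u\rangle$-composition factor has dimension $\ge 2$.
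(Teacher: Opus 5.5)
\textbf{There is a genuine gap, and it cannot be closed.} The step you call the crux, $N=1$, is false. So is the case of the statement it is meant to cover, namely $q$ a power of $p$.

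For any $p$-element $u\in X$, each $\lambda_i(u)$ has $p$-power order in $\mathbb{F}_p^{*}$, which has order $p-1$, so $\lambda_i(u)=1$. Hence $N$ contains every $p$-element of $X$. Since $N$ is itself a $p$-group, $N$ is exactly the Sylow $p$-subgroup of $X$, and $N=1$ holds only when $p\nmid |X|$.

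Here is a concrete counterexample. Take $H=\mathrm{GL}(2,2)\cong S_3$ acting on $V=\mathbb{F}_2^2$; this is supersoluble, faithful and irreducible, with $O_2(H)=1$. Let $h_1$ be a transvection and $h_2=1$. Then $X\cong C_2$ has the composition series $0<C_V(h_1)<V$ with both factors of order $2$, so $(h_1,h_2)$ is not $V$-large. Yet $q=2$ divides $\exp(X)$ and does not divide $p-1=1$.

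So no argument through $O_p(H)=1$, Clifford theory or the decomposition $H=K\rtimes P$ can succeed. Your fallback claim, that $p$-elements of $H$ cannot act unipotently, contradicts the fact that every $p$-element of $\mathrm{GL}(n,p)$ is unipotent.

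\textbf{What survives.} The true statement is the lemma for $q$ a power of a prime different from $p$. This is what the paper's proof actually establishes, since it explicitly uses that $q$ is coprime to $p$, and it is what the later applications need. For that case your setup already finishes the proof without $N=1$. Choose $y\in X$ of order exactly $q$. Since $N$ is a $p$-group, $\langle y\rangle\cap N=1$, so $yN$ has order $q$ in $X/N$. Hence $q$ divides $\exp(X/N)$, which divides $p-1$, a contradiction.

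This is the paper's argument, with your $N$ playing the role of $\ker\gamma$; Lemma~\ref{lem:abelian} is not needed. Your formulation via $\exp(X/N)$ is actually the more precise one. The paper's step from ``$q$ divides $|\mathrm{Im}(\gamma)|$'' to ``$q$ divides $|\mathrm{Aut}(W_i/W_{i+1})|$ for some $i$'' does not follow when $q$ is a proper prime power, for instance $q=4$ and $p=3$, which is used in Case~2c. The exponent argument handles that case correctly.
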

	\begin{proof}
		Let $V = W_0 \geq W_1 \geq \dots \geq W_s = 1$ be a composition 
		series of $V$ as an $X$-module. The action of $X$ on each factor 
		$W_i/W_{i+1}$ induces a homomorphism
		$$\gamma \colon X \to \prod_{0 \leq i \leq s-1} \mathrm{Aut}(W_i/W_{i+1}),$$
		where the $i$-th component of $\gamma(x)$ is the automorphism 
		$wW_{i+1} \mapsto xwx^{-1}W_{i+1}$ of $W_i/W_{i+1}$.
		The kernel of $\gamma$ acts trivially on each factor 
		$W_i/W_{i+1}$, hence embeds in the group of upper triangular 
		matrices with ones on the diagonal over $\mathbb{F}_p$, which 
		is a $p$-group. Since $q$ is a prime power coprime to $p$, 
		$q$ divides $|\mathrm{Im}(\gamma)|$, hence $q$ divides 
		$|\mathrm{Aut}(W_i/W_{i+1})|$ for some $i$. Since 
		$|\mathrm{Aut}(W_i/W_{i+1})|$ divides $p-1$ when 
		$|W_i/W_{i+1}| = p$, and $q$ does not divide $p-1$, we 
		conclude that $|W_i/W_{i+1}| \geq p^2$, so $(h_1,h_2)$ 
		is $V$-large.
	\end{proof}
	
\begin{corollary}\label{cor:q-large}
	Let $q$ be a prime power that does not divide $p-1$. If $q$ 
	divides $\exp(\langle h_1, h_2 \rangle)$, then 
	$\pi(h_1,h_2) \leq 1/p^2$.
\end{corollary}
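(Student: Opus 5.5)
This is immediate from Lemma~\ref{lem:q-large} combined with Lemma~\ref{lem:large}, in the same way that Corollary~\ref{cor:nonabelian} follows from Lemmas~\ref{lem:abelian} and~\ref{lem:large}.

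Set $X=\langle h_1,h_2\rangle$, so that the hypothesis reads: $q$ is a prime power not dividing $p-1$, and $q$ divides $\exp(X)$.

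The only point to check is that Lemma~\ref{lem:q-large} applies. Its proof uses that $q$ is coprime to $p$. This is automatic here: $O_p(H)=1$ and $H=K\rtimes P$ give no guarantee, but if $q$ were a power of $p$ dividing $\exp(X)$, then $X$ would contain an element of order divisible by $p$. That case needs separate handling, or should be excluded.

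In the intended application, $q$ will be a power of a prime different from $p$, so I take the statement with that implicit convention, i.e.\ $\gcd(q,p)=1$.

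\begin{enumerate}
\item Lemma~\ref{lem:q-large} shows that $(h_1,h_2)$ is $V$-large. Its argument is that the kernel of the action on the composition factors is a $p$-group. Hence $q$ divides the order of the image in $\prod_i \mathrm{Aut}(W_i/W_{i+1})$, which forces some composition factor to have order at least $p^2$.
\item Lemma~\ref{lem:large} then gives $\pi(h_1,h_2)\le 1/p^2$ directly.
\end{enumerate}

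The main obstacle is the possible case where $q$ is a power of $p$. If needed, I would dispose of it as follows. Elements of $X$ of $p$-power order lie in a conjugate of the abelian group $P$. If all composition factors of $V$ under $X$ had order $p$, then by the proof of Lemma~\ref{lem:abelian}, $X$ would be abelian with a normal Sylow $p$-subgroup acting unipotently.

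The aim would be to show that such an action is incompatible with $P\cap\Fit(H)=1$ only when $p$-elements are present. I would not pursue this unless the later argument requires it, and would instead record the coprimality assumption explicitly.
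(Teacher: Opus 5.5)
Your argument is the paper's own: the corollary is Lemma~\ref{lem:q-large} followed by Lemma~\ref{lem:large}, and with the hypothesis $\gcd(q,p)=1$ it is complete. Your worry about $q$ being a power of $p$ is justified, and more so than you realise. In that case the statement, and Lemma~\ref{lem:q-large} itself, is false, not merely unproved.

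For a counterexample take $G=S_4=V\rtimes S_3$, so $p=n=2$, with $h_1=(1\,2)$, $h_2=1$ and $q=2$. Then $q\nmid p-1$ and $q\mid\exp\langle h_1,h_2\rangle$. Yet every $\langle v_1h_1,v_2\rangle$ lies in the $2$-group $V\langle h_1\rangle$, so $\pi(h_1,h_2)=1$. More generally, whenever $p\mid|H|$, any pair generating a nontrivial $p$-subgroup $X$ is a counterexample. Indeed $VX$ is a $p$-group, and $X$ is not $V$-large, since a $p$-group has only trivial composition factors in characteristic $p$.

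So your sentence ``This is automatic here'' is wrong. The plan in your last paragraph to dispose of the $p$-power case cannot succeed. The right fix is the one you settle on: put $\gcd(q,p)=1$ into the hypothesis. The paper's proof of Lemma~\ref{lem:q-large} tacitly uses exactly this (``since $q$ is a prime power coprime to $p$''). The restricted version is all that is needed later:
\begin{itemize}
\item In Case~1, $q\neq p$ explicitly.
\item In Case~2c, $3\nmid|H|$, because $H=CQ$ with $C$ central and $O_3(H)=1$.
\item In Case~2b, the abelian subgroup $K$ should be chosen of odd order. This is possible because $Z(H)$ has odd order and $H/Z(H)$ is not a $2$-group.
\end{itemize}

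Your recap of Lemma~\ref{lem:q-large} also needs one correction. Knowing that $q$ divides the \emph{order} of the image of $\gamma$ is not enough. If all factors had order $p$, the image lies in $C_{p-1}^s$, whose order $(p-1)^s$ can be divisible by $q$ (e.g.\ $p=3$, $q=4$). You need exponents instead. Since $\ker\gamma$ is a $p$-group and $\gcd(q,p)=1$, $q$ divides $\exp(\mathrm{Im}\,\gamma)$, while $\exp(C_{p-1}^s)=p-1$.
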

\begin{proof}
	Immediate from Lemmas~\ref{lem:q-large} and~\ref{lem:large}.
	\end{proof}

	\section{Proof of the main theorem}
	
	\subsection{The primitive case}
	
	We first establish the bound for primitive soluble groups.
	
	\begin{lemma}\label{lem:primitive}
		Let $G = V \rtimes H$ be a primitive soluble group that is 
		not supersoluble, where $|V| = p^n$ with $p$ prime and 
		$n \geq 2$, and $H$ is supersoluble. Then 
		$\Psu(G) \leq 16/25$, with equality if and only if $p = 5$, 
		$n=2$, and $H/Z(H) \cong C_2 \times C_2$.
	\end{lemma}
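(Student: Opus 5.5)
The plan is to split the sum $\Psu(G)=|H|^{-2}\sum_{(h_1,h_2)}\pi(h_1,h_2)$ according to whether $\langle h_1,h_2\rangle$ is abelian. Let $c=\cp(H)$ be the commuting probability of $H$, i.e.\ the proportion of commuting pairs. For noncommuting pairs, Corollary~\ref{cor:nonabelian} gives $\pi\le 1/p^2$. For commuting pairs we use the trivial bound $\pi\le 1$, but refine it when the pair generates an abelian subgroup whose exponent is divisible by a prime power $q\nmid p-1$; there Corollary~\ref{cor:q-large} again gives $\pi\le 1/p^2$. This yields
\[
\Psu(G)\le c_0+\frac{1-c_0}{p^2},
\]
where $c_0$ is the proportion of commuting pairs $(h_1,h_2)$ such that $\exp\langle h_1,h_2\rangle$ divides $p-1$. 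Since $G$ is primitive and not supersoluble, $H$ is nonabelian. Indeed, if $H$ were abelian of exponent dividing $p-1$, then $V$ would be absolutely irreducible of dimension one. More generally, an abelian $H$ acting irreducibly forces $n=1$ exactly when $\exp(H)\mid p-1$. So in the abelian case $\exp(H)\nmid p-1$ and $c_0$ is small. Suppose instead that $H$ is nonabelian. Then $c\le 5/8$ (Gustafson), which gives $\Psu(G)\le 5/8+3/(8p^2)$. This is below $16/25$ for $p\ge 7$ and $p=2,3$ only if we use further input. The real work is to bound $c_0$ for small $p$.

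Case analysis on $p$:

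\emph{Case $p=2$.} Here $p-1=1$, so $c_0$ counts only the pair $(1,1)$, giving $c_0=1/|H|^2$ and $\Psu(G)\le \tfrac14+\tfrac34|H|^{-2}$. This is tiny once $|H|\ge 2$, and $H\ne1$ since $n\ge2$.

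\emph{Case $p=3$.} Only elements of order dividing $2$ contribute to $c_0$. Pairs of commuting involutions in a supersoluble $H$ with $O_3(H)=1$ should be a small proportion. I would bound $c_0$ by $i(H)^2/|H|^2$, where $i(H)$ is the number of elements with $x^2=1$, and show $i(H)/|H|\le 1/2$ unless $H$ is an elementary abelian $2$-group. That case is excluded, because irreducibility over $\mathbb F_3$ would force $n=1$. This gives $\Psu(G)\le \tfrac14+\tfrac34\cdot\tfrac19<16/25$.

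\emph{Case $p\ge 5$.} Use $c_0\le c\le 5/8$ when $H$ is nonabelian. This gives $5/8+3/(8p^2)$, which equals $5/8+3/200=16/25$ at $p=5$ and is strictly smaller for $p\ge7$. When $H$ is abelian, $\exp(H)\nmid p-1$ forces some cyclic Sylow factor to be too large. Then $c_0\le 1/4$ (the subgroup of elements of order dividing $\gcd$ with $p-1$ has index at least $2$ in $H$), so $\Psu(G)\le 1/4+3/(4p^2)$, which is well below the bound.

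For the equality statement, equality forces $p=5$, $c=5/8$ (so $H/Z(H)\cong C_2\times C_2$ by the classical characterization of equality in Gustafson's bound), and $c_0=c$. Moreover every noncommuting pair must have $\pi$ exactly $1/25$, which forces $n=2$: in Lemma~\ref{lem:large} we need $|W_2||W_1|=|V|^2/p^2$, i.e.\ $W_1=0$ and $|V|=p^2$. Conversely, for $(C_5\times C_5)\rtimes Q_8$ the bounds are attained. Commuting pairs generate cyclic subgroups of order dividing $4$, where $\pi=1$ since a cyclic group of order dividing $4$ acts diagonalizably over $\mathbb F_5$. Noncommuting pairs generate $Q_8$, acting irreducibly, and by the complement count give exactly $25/625$.

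The main obstacle is the $p=3$ involution count, together with making rigorous the claim that abelian $H$ with $\exp(H)\mid p-1$ forces $n=1$. The latter follows since such $H$ is diagonalizable over $\mathbb F_p$, so every irreducible module is one-dimensional.
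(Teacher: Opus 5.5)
\textbf{There is a genuine gap: the cases $p=2$ and $p=3$ with $H$ nonabelian are not proved, and the equality argument is flawed.}

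\textbf{What works.} Your treatment of abelian $H$ is fine; it uses $\exp(H)\nmid p-1$ together with $p\nmid |H|$. So is the case $p\ge 5$, which only needs $\Psu(G)\le \cp(H)+(1-\cp(H))/p^2$ and Gustafson, exactly as in the paper's Case~2a.

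\textbf{The cases $p=2,3$.} These rest on $\Psu(G)\le c_0+(1-c_0)/p^2$, which applies Corollary~\ref{cor:q-large} with $q$ a power of $p$. That is not legitimate. The proof of Lemma~\ref{lem:q-large} needs $\gcd(q,p)=1$, because the kernel of $\gamma$ is a $p$-group. For $q=p$ the conclusion is actually false: a $p$-element of $H$ acts unipotently on $V$ and can generate a subgroup that is not $V$-large.
\begin{itemize}
\item Take $S_4=\mathbb{F}_2^2\rtimes S_3$ and a transposition $t$. Every $\langle v_1t,v_2\rangle$ lies in the Sylow $2$-subgroup $V\langle t\rangle$, so $\pi(t,1)=1$. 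In fact $\Psu(S_4)=11/24$, whereas your $p=2$ bound gives $13/48$.
\item The same failure occurs at $p=3$ whenever $3\mid |H|$, e.g.\ $H=C_7\rtimes C_3$ acting on $\mathbb{F}_{3^6}$.
\item Your $p=3$ claim that $i(H)\le |H|/2$ unless $H$ is elementary abelian is also false. $D_4$ has $i(D_4)=6$ and acts faithfully and irreducibly on $\mathbb{F}_3^2$. In $\mathbb{F}_3^2\rtimes D_4$, the $28$ pairs lying in one of the two Klein four-subgroups act diagonalizably and so have $\pi=1$. Hence $\Psu\ge 7/16$, contradicting your bound $1/3$.
\end{itemize}
Gustafson alone gives only $23/32$ and $2/3$ here, so extra input is genuinely needed. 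The paper supplies it as follows.
\begin{itemize}
\item For $p=2$: $O_2(H)=1$ forces $|H:Z(H)|\ge 6$. It also yields an abelian subgroup $K>Z(H)$ of odd order with $|K:Z(H)|\ge 3$. The pairs in $K$ other than $(1,1)$ are $V$-large by Corollary~\ref{cor:q-large} with $q$ odd.
\item For $p=3$ with $|H:Z(H)|\ge 6$: Gustafson suffices, giving $17/27$.
\item For $p=3$ with $H/Z(H)\cong C_2\times C_2$: $H$ is nilpotent, so $3\nmid |H|$ and Corollary~\ref{cor:q-large} is safe. One then counts the commuting pairs of exponent at most $2$ in a Sylow $2$-subgroup.
\end{itemize}

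\textbf{The equality case.} Suppose you grant $|W_2||W_1|=|V|^2/p^2$. This gives $W_2=V$ and $|V:W_1|=p^2$, not $W_1=0$, so $n=2$ does not follow. Moreover, summing over all $W_2$-cosets, Lemma~\ref{lem:large} really gives $\pi\le |W_1|/|W_2|$. This equals $1/p^2$ whenever $|W_2/W_1|=p^2$, for any $n$.

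What equality actually yields is $\cp(H)=5/8$, and $\pi=1$ for every commuting pair. One can then argue as follows.
\begin{itemize}
\item $H$ is nilpotent with $O_5(H)=1$, so Corollary~\ref{cor:q-large} applies to the pairs $(h,1)$ and gives $\exp(H)\mid 4$.
\item Hence an abelian subgroup $A$ of index $2$ is diagonalizable over $\mathbb{F}_5$.
\item For an $A$-eigenvector $v$ and $h\in H\setminus A$, the space $\langle v,hv\rangle$ is $H$-invariant, so $n=2$.
\end{itemize}

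\textbf{The converse.} Your converse only checks $Q_8$. The ``if'' direction cannot hold for every $H$ with $H/Z(H)\cong C_2\times C_2$. Indeed, $M_{16}\le \mathrm{GL}(2,5)$ is irreducible with $M_{16}/Z(M_{16})\cong C_2\times C_2$. Its elements of order $8$ generate $V$-large subgroups (Corollary~\ref{cor:q-large} with $q=8$), so $\Psu(G)<16/25$ for that group. Equality additionally requires $\exp(H)\mid 4$, i.e.\ $H\in\{Q_8,D_4,C_4\circ D_4\}$.
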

	\begin{proof}
		We consider separately the cases $H$ abelian and $H$ nonabelian.
		
		\medskip
		
		\noindent\textit{Case 1: $H$ abelian.}
		Since $H$ is abelian and acts faithfully and irreducibly on $V$, 
		$H$ embeds into $\mathrm{GL}(1,p^n) \cong \mathbb{F}_{p^n}^*$, 
		which is cyclic. Hence $H$ is cyclic of some order $m$ dividing 
		$p^n - 1$.
		
		Since $V$ is a faithful irreducible $H$-module and $n \geq 2$, 
		there exists a prime $q$ dividing $m$ such that $q$ divides 
		$p^n-1$ but does not divide $p-1$. In particular $p \neq q$.
		
		Write $m = q^a \cdot r$ with $\gcd(r,q) = 1$ and $a \geq 1$. 
		Since $H$ is cyclic of order $m$, the elements $h \in H$ whose 
		order is not divisible by $q$ are exactly those belonging to 
		the unique subgroup of order $r$. Therefore the probability 
		that $q$ does not divide $|\langle h_1,h_2\rangle|$ is at most
		$$\frac{r^2}{m^2} = \frac{1}{q^{2a}} \leq \frac{1}{q^2}.$$
		If $q$ divides $|\langle h_1,h_2\rangle|$, then 
		$\pi(h_1,h_2) \leq 1/p^2$ by Corollary~\ref{cor:q-large}. 
		Therefore:
		\begin{align*}
			\Psu(G) &\leq \frac{1}{q^{2a}} + 
			\left(1-\frac{1}{q^{2a}}\right)\frac{1}{p^2}\\
			&= \frac{1}{p^2} + \frac{1}{q^{2a}}\left(1-\frac{1}{p^2}\right)\\
			&\leq \frac{1}{p^2}+\frac{1}{q^2}\left(1-\frac{1}{p^2}\right)\\
			&= \frac{1}{p^2}+\frac{1}{q^2}-\frac{1}{p^2q^2}\\
			&\leq \frac{1}{4}+\frac{1}{9}-\frac{1}{36}
			= \frac{1}{3} < \frac{16}{25},
		\end{align*}
		since $p$ and $q$ are distinct primes.

	\medskip
	\noindent\textit{Case 2: $H$ nonabelian.}	
		\medskip
		\noindent\textit{Case 2a: $p \geq 5$.}
			Let $\alpha$ denote the probability that two randomly chosen 
		elements of $H$ generate a nonabelian subgroup. By 
		Corollary~1, we get
		$$\mathrm{P}_{\mathcal{U}}(G) \leq \alpha \cdot \frac{1}{p^2} 
		+ (1-\alpha) \cdot 1 = 1 - \alpha\left(1-\frac{1}{p^2}\right).$$
	Let $\cp(H)$ denote the probability that two randomly chosen 
	elements of $H$ commute. By a theorem of Gustafson \cite{Gus73},
	$$\cp(H) \leq \frac{1}{2}\left(1 + \frac{|Z(H)|}{|H|}\right).$$
	Since $H$ is nonabelian, $|Z(H)|/|H|$ is not the reciprocal 
	of a prime, hence $|Z(H)|/|H| \leq 1/4$, which gives 
	$\cp(H) \leq 5/8$. Since every pair generating an abelian 
	subgroup commutes, $1 - \alpha \leq \cp(H)$, hence 
	$\alpha \geq 3/8$. Therefore,
	$$\Psu(G) \leq 1 - \frac{3}{8}\left(1-\frac{1}{p^2}\right) 
	\leq 1 - \frac{3}{8}\cdot\frac{24}{25} = \frac{16}{25},$$
	with equality if and only if $p=5$, $\alpha = 3/8$, and 
	$|Z(H)|/|H| = 1/4$, i.e.\ $H/Z(H) \cong C_2 \times C_2$, 
	which is realized by $H = Q_8$ and $H = D_4$.
		
		\medskip

		\noindent\textit{Case 2b: $p = 2$.}
	Since $V$ is a faithful irreducible $H$-module, $O_2(H)=1.$
	Hence $H/Z(H)$ is not a $2$-group, otherwise a Sylow 
	$2$-subgroup of $H$ would be normal, contradicting $O_2(H)=1$. 
	In particular $|H/Z(H)| \geq 6$ and $H$ contains an abelian subgroup 
	$K$ such that $Z(H) < K$ and $|K/Z(H)| \geq 3$.
	
	Since $p-1=1$, every prime $q$ satisfies $q \nmid p-1$, so 
	by Corollaries~\ref{cor:nonabelian} and~\ref{cor:q-large}, 
	$\pi(h_1,h_2) \leq 1/4$ if either $\langle h_1,h_2\rangle$ 
	is nonabelian or $1 \neq \langle h_1,h_2\rangle \leq K$.
	Setting $a = |H:Z(H)|$, it follows that
	\begin{align*}
		\Psu(G) &\leq 1-\frac{3}{4}\left(\left(1-\frac{1}{2}
		-\frac{1}{2a}\right)+\frac{|K|^2-1}{|H|^2}\right)\\
		&\leq 1-\frac{3}{4}\left(\frac{1}{2}-\frac{1}{2a}+
		\frac{9|Z(H)|^2-1}{|H|^2}\right)\\
		&\leq 1-\frac{3}{4}\left(\frac{1}{2}-\frac{1}{2a}+
		\frac{8}{a^2}\right)\\
		&\leq 1-\frac{3}{4}\left(\frac{1}{2}-\frac{1}{12}+
		\frac{8}{36}\right) = \frac{25}{48} < \frac{16}{25}.
	\end{align*}
	
		\medskip
\noindent\textit{Case 2c: $p = 3$.}
If $|H:Z(H)| \geq 6$, then
$$\Psu(G) \leq 1-\frac{8}{9}\left(1-\frac{1}{2}
-\frac{1}{2a}\right) \leq \frac{17}{27} < \frac{16}{25}.$$
So we may assume $H/Z(H) \cong C_2 \times C_2$. Since $Z(H)$ 
is cyclic (otherwise, by \cite[Proposition 16.8]{dh}, $H$ cannot 
have a faithful irreducible representation), we have 
$H = C \rtimes Q$ where $C$ is the unique subgroup of $Z(H)$ 
of odd order $m$ and $Q$ is a Sylow $2$-subgroup of $H$. 
Since $C \leq Z(H)$, if $Q$ were abelian then $H = C \times Q$ 
would be abelian, a contradiction. Hence $Q$ is nonabelian, 
with $Z(Q)$ cyclic of order $2^b$, $b \geq 1$.

Let $U$ be the unique subgroup of $Z(Q)$ of order $2$, and let $\iota$ denote the number of pairs 
$(h_1,h_2) \in Q \times Q$ with $\exp(\langle h_1,h_2\rangle) 
\leq 2$. Note that if $\exp(\langle h_1,h_2\rangle) \leq 2$ 
then $\langle h_1,h_2\rangle$ is abelian. If 
$\exp(\langle h_1,h_2\rangle) > 2$, then $\exp(\langle h_1,h_2\rangle)$ 
is divisible by some prime power $q \neq 2$ not dividing 
$p-1=2$, so by Corollary~\ref{cor:q-large}, 
$\pi(h_1,h_2)\leq 1/9$. Therefore
$$\Psu(G) \leq 1 - \frac{8}{9}\left(1 - \frac{\iota}{|H|^2}\right)
\leq 1 - \frac{8}{9}\left(1 - \frac{\iota}{|Q|^2}\right).$$
The pairs counted by $\iota$ belong neither to the at least 
$3|Q|^2/8$ pairs generating a nonabelian subgroup of $Q$, nor to 
the $2^{2b}-4$ pairs in $(Z(Q) \times Z(Q)) \setminus (U \times U$), 
hence)
$$\frac{\iota}{|Q|^2} \leq \frac{5}{8} - 
\frac{2^{2b}-4}{16\cdot 2^{2b}}.$$
For $b = 1$, $Q \cong Q_8$ or $Q \cong D_4$, and a direct 
computation gives $\iota/|Q|^2 = 1/16$ for $Q = Q_8$ 
and $\iota/|Q|^2 = 7/16$ for $Q = D_4$.
For $b = 2$, $Q \cong M_{16}$ or $Q \cong D_4 \ast C_4$, 
where $M_{16}$ denotes the modular maximal-cyclic group of 
order $16$ and $D_4 \ast C_4$ denotes the central product 
of $D_4$ with $C_4$. A direct computation gives 
$\iota/|Q|^2 = 1/16$ for $Q = M_{16}$ and 
$\iota/|Q|^2 = 5/32$ for $Q = D_4 \ast C_4$.
In all these cases $\iota/|Q|^2 \leq 7/16$, and therefore
$$\Psu(G) \leq 1 - \frac{8}{9}\cdot\frac{9}{16} = \frac{1}{2} 
< \frac{16}{25}.$$
For $b \geq 3$, using $b=3$ to minimize:
$$\Psu(G) \leq 1 - \frac{8}{9}\left(\frac{3}{8} + 
\frac{60}{1024}\right) = \frac{59}{96} < \frac{16}{25}.$$
	\end{proof}

	\subsection{Reduction to the primitive case}
	
	We now prove Theorem~\ref{thm:main} by induction on $|G|$.
	
	\begin{proof}[Proof of Theorem~\ref{thm:main}]
		Let $G$ be a finite group with $\Psu(G) \geq 16/25$. We 
		proceed by induction on $|G|$.
		
		\medskip
		\noindent\textit{Step 1: All proper quotients of $G$ are 
			supersoluble.}
		Let $N$ be a non-trivial normal subgroup of $G$. Since every 
		$2$-generated subgroup of $G/N$ is the image of a $2$-generated 
		subgroup of $G$, we have $\Psu(G/N) \geq \Psu(G) \geq 16/25$. 
		By induction, $G/N$ is supersoluble.
		
		\medskip
		\noindent\textit{Step 2: $G$ has a unique minimal normal 
			subgroup.}
		Suppose $G$ has two distinct minimal normal subgroups $N_1$ 
		and $N_2$. Then $N_1 \cap N_2 = 1$, so $G$ embeds into 
		$G/N_1 \times G/N_2$. Since both $G/N_1$ and $G/N_2$ are 
		supersoluble by Step~1, and the class of supersoluble groups 
		is closed under taking subgroups and direct products, $G$ is 
		supersoluble. 
		
		\medskip
		\noindent\textit{Step 3: $G$ is soluble.}
	The probability that two random elements of $G$ generate a 
	soluble subgroup is at least $\Psu(G) \geq 16/25 > 11/30$. 
	By a theorem of Guralnick and Wilson \cite[Theorem A]{GW00}, 
	the class of finite soluble groups is $\frac{11}{30}$-recognizable, 
	hence $G$ is soluble.
		
		\medskip
		\noindent\textit{Step 4: $G$ is primitive soluble.}
		By Steps~1 and~3, $G$ is soluble with a unique minimal normal 
		subgroup $N$. If $N$ is contained in the Frattini subgroup $\Phi(G)$ of $G$, then $G/\Phi(G)$ is a 
		quotient of $G/N$, hence supersoluble. Since supersolubility 
		is inherited from $G/\Phi(G)$ to $G$ \cite[VI Satz 8.6]{hup},
		$G$ would be supersoluble, a contradiction. Therefore 
		$N \not\leq \Phi(G)$, so there exists a maximal subgroup $M$ 
		of $G$ with $NM = G$ and $N \cap M = 1$. Since $N$ is the 
		unique minimal normal subgroup of $G$ and $\mathrm{Core}_G(M) 
		\leq N \cap M = 1$, the subgroup $M$ is core-free, and $G$ 
		is a primitive soluble group with $\mathrm{Soc}(G) = N$.
		
		\medskip
		\noindent\textit{Step 5: Conclusion.}
		Write $G = V \rtimes H$ where $V = N$ is elementary abelian 
		of order $p^n$ and $H \cong M$ is the point stabilizer. 
		Since $G/N \cong H$ is supersoluble by Step~1, and 
		$\Psu(G) \geq 16/25$, Lemma~\ref{lem:primitive} implies 
		that $G$ is supersoluble, completing the induction.
		
		\medskip
		\noindent\textit{Sharpness.}
The group $G = (C_5 \times C_5) \rtimes Q_8$, where $Q_8$ 
acts faithfully and irreducibly on $C_5 \times C_5$, is not 
supersoluble, since its unique minimal normal subgroup 
$C_5 \times C_5$ has order $25 = 5^2$, which is not prime. 
However, all proper subgroups of $G$ are supersoluble: indeed, 
since $5 \equiv 1 \pmod{4}$, the field $\mathbb{F}_5$ contains 
a primitive fourth root of unity, so every proper subgroup of 
$Q_8$ acts reducibly on $C_5 \times C_5$, and hence every 
proper subgroup of $G$ is supersoluble.
Since the probability that an ordered pair of elements 
generates $G$ is $9/25$, it follows that
$$\Psu(G) = 1 - 9/25 = 16/25.$$
This shows that the bound $16/25$ is sharp.
	\end{proof}
	
	\section*{Acknowledgements}
	The computational verifications in this paper were performed 
	using the computer algebra system GAP \cite{GAP}.


\begin{thebibliography}{9}
		
		\bibitem{BH09} 
		J.~C.~Beidleman and H.~Heineken, 
		\textit{Minimal non-$\mathfrak{F}$-groups}, 
		Ric.\ Mat.\ \textbf{58} (2009), no.~1, 33--41.
		
		\bibitem{bln}T. Burness, A. Lucchini and D. Nemmi,
		\textit{On the soluble graph of a finite group},
		J. Combin. Theory Ser. A \textbf{194} (2023), Paper No. 105708, 39 pp.
		
		\bibitem{dh}  K. Doerk and T. Hawkes, \textit{Finite soluble groups}, De Gruyter Expositions in Mathematics, 4. Walter de Gruyter \& Co., Berlin, 1992.
		
		\bibitem{pf} P. Flavell, \textit{Finite groups in which every two elements generate a soluble group}, Invent. Math.
		\textbf{121} (1995) 279-285.

		\bibitem{GAP} 
		The GAP Group, 
		\textit{GAP -- Groups, Algorithms, and Programming, 
			Version 4.15.1}, 2024, 
		\texttt{https://www.gap-system.org}.
		
		\bibitem{Gus73} 
		W.~H.~Gustafson, 
		\textit{What is the probability that two group elements commute?}, 
		Amer.\ Math.\ Monthly \textbf{80} (1973), 1031--1034.
		
		\bibitem{GW00} 
		R.~M.~Guralnick and J.~S.~Wilson, 
		\textit{The probability of generating a finite soluble group}, 
		Proc.\ London Math.\ Soc.\ \textbf{81} (2000), no.~2, 405--427.
		
		\bibitem{hup} B. Huppert, \textit{Endliche Gruppen},  Die Grundlehren der mathematischen Wissenschaften, Band 134. Springer-Verlag, Berlin-New York, 1967. 
		
		\bibitem{mqrd} N. Menezes, M. Quick and C. Roney-Dougal,   \textit{The probability of generating a finite simple group},
		Israel J. Math. \textbf{198}(1) (2013) 371–392.
		
		\bibitem{th} J. G. Thompson, \textit{Nonsolvable finite groups all of whose local subgroups are solvable}, Bull.
		Amer. Math. Soc. \textbf{74} (1968) 383-437.
		
	\end{thebibliography}
\end{document}